\newtheorem{theorem}{Theorem}[section]
\theoremstyle{plain}
\newtheorem{definition}{Definition}[section]
\numberwithin{equation}{section}
\begin{document}
\title[]{ON STRONG $r$-HELIX SUBMANIFOLDS AND SPECIAL CURVES}
\author{Evren Z\i plar}
\address{Department of Mathematics, Faculty of Science, University of
Ankara, Tando\u{g}an, Turkey}
\email{evrenziplar@yahoo.com}
\urladdr{}
\author{Ali \c{S}enol}
\address{Department of Mathematics, Faculty of Science, \c{C}ank\i r\i\
Karatekin University, \c{C}ank\i r\i , Turkey}
\email{asenol@karatekin.edu.tr}
\author{Yusuf Yayl\i }
\address{Department of Mathematics, Faculty of Science, University of
Ankara, Tando\u{g}an, Turkey}
\email{yayli@science.ankara.edu.tr}
\thanks{}
\urladdr{}
\date{}
\subjclass[2000]{ \ 53A04, 53B25, 53C40, 53C50.}
\keywords{Strong $r$-helix submanifold; Line of curvature; Geodesic curve;
Slant helix.\\
Corresponding author: Evren Z\i plar, e-mail: evrenziplar@yahoo.com}
\thanks{}

\begin{abstract}
In this paper, we investigate special curves on a strong $r$-helix
submanifold in Euclidean $n$-space $E^{n}$. Also, we give the important
relations between strong $r$-helix submanifolds and the special curves such
as line of curvature, geodesic and slant helix.
\end{abstract}

\maketitle

\section{Introduction}

In differential geometry of manifolds, an helix submanifold of $IR^{n}$ with
respect to a fixed direction $d$ in $IR^{n}$ is defined by the property that
tangent planes make a constant angle with the fixed direction $d$ (helix
direction) in [5]. Di Scala and Ruiz-Hern\'{a}ndez have introduced the
concept of these manifolds in [5]. Besides, the concept of strong $r$-helix
submanifold of $IR^{n}$ was introduced in [4]. Let $M\subset IR^{n}$ be a
submanifold and let $H(M)$ be the set of helix directions of $M$. We say
that $M$ is a strong $r$-helix if the set $H(M)$ is a linear subspace of $%
IR^{n}$ of dimension greater or equal to $r$ in [4].

Recently, M. Ghomi worked out the shadow problem given by H.Wente. And, He
mentioned the shadow boundary in [8]. Ruiz-Hern\'{a}ndez investigated that
shadow boundaries are related to helix submanifolds in [12].

Helix hypersurfaces has been worked in nonflat ambient spaces in [6,7].
Cermelli and Di Scala have also studied helix hypersurfaces in liquid
cristals in [3].

The plan of this paper is as follows. In section 2, we mention some basic
facts in the general theory of strong $r$-helix, manifolds and curves. And,
in section 3, we give the important relations between strong $r$-helix
submanifolds and some special curves such as line of curvature, geodesic and
slant helix.

\section{PRELIMINARIES}

\begin{definition}
Let $M\subset IR^{n}$ be a submanifold of a euclidean space. A unit vector $%
d\in IR^{n}$ is called a helix direction of $M$ if the angle between $d$ and
any tangent space $T_{p}M$ is constant. Let $H(M)$ be the set of helix
directions of $M$. We say that $M$ is a strong $r$-helix if $H(M)$ is a $r$%
-dimensional linear subspace of $IR^{n}$ [4].
\end{definition}

\begin{definition}
A submanifold $M\subset IR^{n}$ is a strong $r$-helix if the set $H(M)$ is a
linear subspace of $IR^{n}$ of dimension greater or equal to $r$ [4].
\end{definition}

\begin{definition}
A unit speed curve $\alpha :I\rightarrow E^{n}$ is called a slant helix if
its unit principal normal $V_{2}$ makes a constant angle with a fixed
direciton $U$ [1].
\end{definition}

\begin{definition}
Let the $(n-k)$-manifold $M$ be submanifold of the Riemannian manifold $%
\overline{M}=E^{n}$ and let $\overline{D}$ be the Riemannian connexion on $%
\overline{M}=E^{n}$. For $C^{\infty \text{ }}$fields $X$ and $Y$ with domain 
$A$ on $M$ (and tangent to $M$), define $D_{X}Y$ and $V(X,Y)$ on $A$ by
decomposing $\overline{D}_{X}Y$ into unique tangential and normal
components, respectively; thus,%
\begin{equation*}
\overline{D}_{X}Y=D_{X}Y+V(X,Y)\text{. }
\end{equation*}%
Then, $D$ is the Riemannian connexion on $M$ and $V$ is a symmetric
vector-valued 2-covariant $C^{\infty \text{ }}$tensor called the second
fundamental tensor. The above composition equation is called the Gauss
equation [9].
\end{definition}

\begin{definition}
Let the $(n-k)$-manifold $M$ be submanifold of the Riemannian manifold $%
\overline{M}=E^{n}$ , let $\overline{D}$ be the Riemannian connexion on $%
\overline{M}=E^{n}$ and let $D$ be the Riemannian connexion on $M$. Then,
the formula of Weingarten%
\begin{equation*}
\overline{D}_{X}N=-A_{N}(X)+D_{X}^{\bot }N
\end{equation*}%
for every $X$ and $Y$ tangent to $M$ and for every $N$ normal to $M$. $A_{N}$
is the shape operator associated to $N$ also known as the Weingarten
operator corresponding to $N$ and $D^{\bot }$ is the induced connexion in
the normal bundle of $M$ ($A_{N}(X)$ is also the tangent component of $-%
\overline{D}_{X}N$ and will be denoted by $A_{N}(X)$ $=$tang($-\overline{D}%
_{X}N$)). Specially, if $M$ is a hypersurface in $E^{n}$, we have $%
\left\langle V(X,Y),N\right\rangle =\left\langle A_{N}(X),Y\right\rangle $
for all $X$, $Y$ tangent to $M$. So,%
\begin{equation*}
V(X,Y)=\left\langle V(X,Y),N\right\rangle N=\left\langle
A_{N}(X),Y\right\rangle N
\end{equation*}%
and we obtain%
\begin{equation*}
\overline{D}_{X}Y=D_{X}Y+\left\langle A_{N}(X),Y\right\rangle N\text{ .}
\end{equation*}%
For this definition 2.5, note that the shape operator $A_{N}$ is defined by
the map $A_{N}:\varkappa (M)\rightarrow \varkappa (M)$, where $\varkappa (M)$
is the space of tangent vector fields on $M$ and if $p\in M$, the shape
operator $A_{N}$ is defined by the map $A_{p}:T_{p}(M)\rightarrow T_{p}(M)$%
.The eigenvalues of $A_{p}$ are called the principal curvatures (denoted by $%
\lambda _{i}$) and the eigenvectors of $A_{p}$ are called the principal
vectors [10,11].
\end{definition}

\begin{definition}
If $\alpha $ is a (unit speed) curve in $M$ with $C^{\infty \text{ }}$unit
tangent $T$, then $V(T,T)$ is called normal curvature vector field of $%
\alpha $ and $k_{T}=\left\Vert V(T,T)\right\Vert $ is called the normal
curvature of $\alpha $ [9].
\end{definition}

\section{Main Theorems}

\begin{theorem}
\textbf{\ }Let $M$ be a strong $r$-helix hypersurface and $H(M)\subset E^{n}$
be the set of helix directions of $M$. If $\alpha :I\subset IR\rightarrow M$
is a (unit speed) line of curvature (not a line) on $M$, then $d_{j}\notin
Sp\left\{ N,T\right\} $ along the curve $\alpha $ for all $d_{j}\in H(M)$,
where $T$ is the tangent vector field of $\alpha $ and $N$ is a unit normal
vector field of $M$.
\end{theorem}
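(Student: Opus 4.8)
The plan is to argue by contradiction. Suppose that for some $d_{j}\in H(M)$ we have $d_{j}\in Sp\left\{ N,T\right\}$ at every point of $\alpha$. Since $N\bot \,T_{p}M$ and $T$ is tangent, the pair $\left\{ N,T\right\} $ is orthonormal along $\alpha$, so I may write
\[
d_{j}=a\,N+b\,T,\qquad a=\left\langle d_{j},N\right\rangle ,\quad b=\left\langle d_{j},T\right\rangle ,
\]
where $a,b$ are functions of the arclength $s$. First I would record two consequences of the hypotheses. Because $d_{j}$ is a helix direction of the hypersurface $M$, the angle between $d_{j}$ and each tangent space $T_{p}M$ is constant, which for a hypersurface is equivalent to $\left\vert \left\langle d_{j},N\right\rangle \right\vert $ being constant; hence along the connected curve $\alpha$ the function $a$ is constant. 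Because $\alpha$ is a line of curvature, its tangent $T$ is a principal direction, $A_{N}(T)=\lambda T$, and combining the Weingarten formula with the fact that the normal bundle of a hypersurface is one-dimensional (so $D_{T}^{\bot }N=0$) yields the Rodrigues relation $\overline{D}_{T}N=-A_{N}(T)=-\lambda T$.

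Next I would differentiate the identity $d_{j}=aN+bT$ along $\alpha$, using that $d_{j}$ is a fixed vector of $E^{n}$, so that $\overline{D}_{T}d_{j}=0$. With $a^{\prime }=0$ and $\overline{D}_{T}N=-\lambda T$ this gives
\[
0=\overline{D}_{T}d_{j}=a\,\overline{D}_{T}N+b^{\prime }\,T+b\,\overline{D}_{T}T=\left( b^{\prime }-a\lambda \right) T+b\,\overline{D}_{T}T .
\]
Since $\alpha$ has unit speed, $\overline{D}_{T}T$ is orthogonal to $T$, so the component along $T$ and the component along $\overline{D}_{T}T$ must vanish separately. The orthogonal part forces $b\,\overline{D}_{T}T=0$, that is, either $b\equiv 0$ or $\overline{D}_{T}T=0$.

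Finally I would exploit the assumption that $\alpha$ is not a line. If $b\neq 0$ on some interval, then $\overline{D}_{T}T=0$ there, so $\alpha$ is a straight segment, contradicting the hypothesis; this is exactly where the clause ``not a line'' is used. The remaining branch $b\equiv 0$ gives $d_{j}=aN$, and then $b^{\prime }-a\lambda =0$ yields $a\lambda =0$; since $d_{j}\neq 0$ we have $a\neq 0$, whence $\lambda \equiv 0$ and $\overline{D}_{T}N=0$. I expect this degenerate branch to be the main obstacle: it says $d_{j}$ makes the constant angle $\pi /2$ with every tangent space, so $N=d_{j}/a$ is a fixed direction and $M$ is an open piece of a hyperplane, a totally geodesic setting in which every curve is trivially a line of curvature. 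The argument is closed by ruling this out -- a genuine helix direction of a non-degenerate helix hypersurface is transversal to $M$, i.e.\ the constant angle is not $\pi /2$, so $b=\left\langle d_{j},T\right\rangle \neq 0$ and the contradiction of the previous step applies. Making precise that the helix hypothesis forbids $d_{j}$ from being normal to $M$ (rather than merely forcing $a$ to be constant) is the delicate point of the proof.
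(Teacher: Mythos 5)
Your proposal follows essentially the same route as the paper's own proof: decompose $d_{j}$ in the orthonormal pair $\left\{ N,T\right\} $ with constant normal coefficient (the helix condition), differentiate the constant vector $d_{j}$ along $\alpha $, use the Rodrigues relation $\overline{D}_{T}N=-\lambda T$ coming from the line-of-curvature hypothesis, and split the resulting identity into its $T$-component and the component orthogonal to $T$, invoking ``not a line'' to kill the branch where the curvature would have to vanish. Where you and the paper diverge is exactly at the point you flag as delicate. The paper writes $d_{j}=\cos (\theta _{j})N+\sin (\theta _{j})T$, differentiates, and asserts that together with $N^{\prime }=\lambda \alpha ^{\prime }$ this makes the system $\left\{ \alpha ^{\prime },T^{\prime }\right\} $ linearly dependent, an immediate contradiction. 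That deduction is valid only when $\sin (\theta _{j})\neq 0$: if $\sin (\theta _{j})=0$ (your branch $b\equiv 0$, i.e.\ $d_{j}=\pm N$ along $\alpha $), the differentiated identity collapses to $\lambda =0$ and imposes no condition whatsoever on $T^{\prime }$. So the case you could not close is not a defect peculiar to your write-up; it is an unacknowledged gap in the paper's proof.

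Moreover, that gap cannot be closed from the stated hypotheses, so the transversality lemma you hoped for (``a helix direction is never normal to $M$'') is false as stated. Take $M$ to be a hyperplane in $E^{n}$: every unit vector makes a constant angle with the constant tangent space, so $M$ is a strong $r$-helix hypersurface; the shape operator vanishes identically, so every curve in $M$ --- for instance a circle, which is not a line --- has $A_{N}(T)=0\cdot T$ and is a line of curvature in the sense of the paper's own definition; and the unit normal $N$ is itself a helix direction lying in $Sp\left\{ N,T\right\} $. Hence the theorem fails for hyperplanes, and any proof must break exactly where yours and the paper's do. What is true, and what rescues the argument, is this: since the angle of $d_{j}$ with $T_{p}M$ is constant over all of $M$, if $d_{j}$ is normal to $M$ at one point it is normal everywhere, so $N=\pm d_{j}$ is a fixed vector and connected $M$ is an open piece of a hyperplane. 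Thus, under the additional hypothesis that $M$ is not an open subset of a hyperplane, your branch $b\equiv 0$ is impossible and your argument (equally, the paper's) goes through. You were right to identify this as the crux; the paper simply skips over it.
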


\begin{proof}
We assume that $d_{j}\in Sp\left\{ N,T\right\} $ along the curve $\alpha $
for any $d_{j}\in H(M)$. Then, along the curve $\alpha $, since $M$ is a
strong $r$-helix hypersurface, we can decompose $d_{j}$ in tangent and
normal components:%
\begin{equation}
d_{j}=\cos (\theta _{j})N+\sin (\theta _{j})T
\end{equation}%
where $\theta _{j}$ is constant. From (3.1),by taking derivatives on both
sides along the curve $\alpha $, we get:%
\begin{equation}
0=\cos (\theta _{j})N^{%
{\acute{}}%
}+\sin (\theta _{j})T^{%
{\acute{}}%
}\text{ }
\end{equation}%
Moreover, since $\alpha $ is a line of curvature on $M$,%
\begin{equation}
N%
{\acute{}}%
=\lambda \alpha 
{\acute{}}%
\text{ }
\end{equation}%
along the curve $\alpha $. By using the equations (3.2) and (3.3), we deduce
that the system $\left\{ \alpha 
{\acute{}}%
,T^{%
{\acute{}}%
}\right\} $ is linear dependent. But, the system $\left\{ \alpha 
{\acute{}}%
,T^{%
{\acute{}}%
}\right\} $ is never linear dependent. This is a contradiction. This
completes the proof.
\end{proof}

\begin{theorem}
Let $M$ be a submanifold with $(n-k)$ dimension in $E^{n}$. Let $\overline{D}
$ be Riemannian connexion (standart covariant derivative) on $E^{n}$ and $D$
be Riemannian connexion on $M$. Let us assume that $M\subset E^{n}$ be a
strong $r$-helix submanifold and $H(M)\subset E^{n}$ be the space of the
helix directions of $M$. If $\alpha :I\subset IR\rightarrow M$ is a (unit
speed) geodesic curve on $M$ and if $\left\langle V_{2},\xi
_{j}\right\rangle $ is a constant function along the curve $\alpha $, then $%
\alpha $ is a slant helix in $E^{n}$, where $V_{2}$ is the unit principal
normal of $\alpha $ and $\xi _{j}$ is the normal component of a direction $%
d_{j}\in H(M)$.
\end{theorem}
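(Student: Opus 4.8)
The plan is to exhibit explicitly the fixed direction demanded by Definition 2.3, and the natural candidate is the helix direction $d_{j}$ itself. First I would split $d_{j}$ along $\alpha$ into its tangential and normal parts, writing $d_{j}=d_{j}^{T}+\xi _{j}$, where $d_{j}^{T}$ is tangent to $M$ and $\xi _{j}$ is the normal component named in the statement. Since $d_{j}\in H(M)$ it is a unit vector of $E^{n}$ whose angle with every tangent space $T_{p}M$ is constant; in particular the lengths of $d_{j}^{T}$ and $\xi _{j}$ are constant along $\alpha $, although I do not expect to need this fact for the argument itself.

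The key step is to locate $V_{2}$ relative to $M$. Because $\alpha $ is a unit-speed geodesic, $D_{T}T=0$, so the Gauss equation of Definition 2.4 gives $\overline{D}_{T}T=D_{T}T+V(T,T)=V(T,T)$. On the other hand, for a unit-speed curve $\overline{D}_{T}T=\alpha ^{\prime \prime }=\kappa V_{2}$, with $\kappa $ the curvature and $V_{2}$ the unit principal normal. Hence $\kappa V_{2}=V(T,T)$ lies in the normal space of $M$, so $V_{2}$ is normal to $M$ at every point of $\alpha $. I expect this to be the crucial (if short) observation: geodesy forces the principal normal of $\alpha $ to be purely normal to the submanifold, which is exactly what decouples it from the tangential part of $d_{j}$.

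With $V_{2}$ normal to $M$ and $d_{j}^{T}$ tangent to $M$, their inner product vanishes, so
$$\langle V_{2},d_{j}\rangle =\langle V_{2},d_{j}^{T}\rangle +\langle V_{2},\xi _{j}\rangle =\langle V_{2},\xi _{j}\rangle .$$
By hypothesis $\langle V_{2},\xi _{j}\rangle $ is a constant function along $\alpha $, hence $\langle V_{2},d_{j}\rangle $ is constant as well. Finally, since both $V_{2}$ and $d_{j}$ are unit vectors, $\langle V_{2},d_{j}\rangle $ equals the cosine of the angle between them, so that angle is constant along $\alpha $. Taking the fixed direction $U=d_{j}$ in Definition 2.3 then shows that $\alpha $ is a slant helix in $E^{n}$, which completes the argument.
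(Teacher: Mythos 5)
Your proposal is correct and follows essentially the same route as the paper's own proof: both use the geodesic condition $D_{T}T=0$ together with the Gauss equation to conclude $\kappa V_{2}=V(T,T)$, hence that $V_{2}$ is normal to $M$ along $\alpha$, and then take the inner product of $V_{2}$ with the tangential/normal decomposition of $d_{j}$ so that the tangential term drops out and the hypothesis $\left\langle V_{2},\xi _{j}\right\rangle =$ constant yields $\left\langle V_{2},d_{j}\right\rangle =$ constant. The only cosmetic difference is that you write $d_{j}=d_{j}^{T}+\xi _{j}$ directly, whereas the paper normalizes the components as $d_{j}=\cos (\theta _{j})\xi _{j}+\sin (\theta _{j})T_{j}$ and uses the constancy of $\theta _{j}$; since that angle is constant by the helix property, the two formulations are equivalent.
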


\begin{proof}
Let $T$ be the unit tangent vector field of $\alpha $. Then, from the
formula Gauss in Definition (2.4),%
\begin{equation}
\overline{D}_{T}T=D_{T}T+V(T,T)
\end{equation}%
According to the Theorem, since $\alpha $ is a geodesic curve on $M$,%
\begin{equation}
D_{T}T=0
\end{equation}%
So, by using (3.4),(3.5) and Frenet formulas, we have:%
\begin{equation*}
\overline{D}_{T}T=k_{1}V_{2}=V(T,T)
\end{equation*}%
That is, the vector field $V_{2}\in \vartheta (M)$ along the curve $\alpha $%
, where $\vartheta (M)$ is the normal space of $M$. On the other hand, since 
$M$ is a strong $r$-helix submanifold, we can decompose any $d_{j}\in H(M)$
in its tangent and normal components:%
\begin{equation}
d_{j}=\cos (\theta _{j})\xi _{j}+\sin (\theta _{j})T_{j}
\end{equation}%
where $\theta _{j}$ is constant. Moreover, according to the Theorem, $%
\left\langle V_{2},\xi _{j}\right\rangle $ is a constant function along the
curve $\alpha $ for the normal component $\xi _{j}$ of \ a direction $%
d_{j}\in H(M)$. Hence, doing the scalar product with $V_{2}$ in each part of
the equation (3.6), we obtain: 
\begin{equation}
\left\langle d_{j},V_{2}\right\rangle =\cos (\theta _{j})\left\langle
V_{2},\xi _{j}\right\rangle +\sin (\theta _{j})\left\langle
V_{2},T_{j}\right\rangle
\end{equation}%
Since $\cos (\theta _{j})\left\langle V_{2},\xi _{j}\right\rangle =$constant
and $\left\langle V_{2},T_{j}\right\rangle =0$ ( $V_{2}\in \vartheta (M)$)
along the curve $\alpha $, from (3.7) we have:%
\begin{equation*}
\left\langle d_{j},V_{2}\right\rangle =\text{constant.}
\end{equation*}%
along the curve $\alpha $. Consequently, $\alpha $ is a slant helix in $%
E^{n} $.
\end{proof}

\begin{theorem}
Let $M$ be a submanifold with $(n-k)$ dimension in $E^{n}$. Let $\overline{D}
$ be Riemannian connexion (standart covariant derivative) on $E^{n}$ and $D$
be Riemannian connexion on $M$. Let us assume that $M\subset E^{n}$ be a
strong $r$-helix submanifold and $H(M)\subset E^{n}$ be the space of the
helix directions of $M$. If $\alpha :I\subset IR\rightarrow M$ is a (unit
speed) curve on $M$ with the normal curvature function $k_{T}=0$ and if $%
\left\langle V_{2},T_{j}\right\rangle $ is a constant function along the
curve $\alpha $, then $\alpha $ is a slant helix in $E^{n}$, where $V_{2}$
is the unit principal normal of $\alpha $ and $T_{j}$ is the tangent
component of a direction $d_{j}\in H(M)$.
\end{theorem}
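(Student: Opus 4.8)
The plan is to mirror the argument of Theorem 3.2, but with the roles of the tangent and normal components of $M$ interchanged. The decisive difference in the hypotheses is that here $\alpha$ satisfies $k_{T}=0$ rather than being a geodesic, and this will force the principal normal $V_{2}$ to lie in the \emph{tangent} space of $M$ rather than in the normal space.

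First I would invoke the Gauss equation from Definition 2.4, writing $\overline{D}_{T}T=D_{T}T+V(T,T)$, where $T$ is the unit tangent field of $\alpha$. By Definition 2.6 the normal curvature is $k_{T}=\left\Vert V(T,T)\right\Vert$, so the hypothesis $k_{T}=0$ gives $V(T,T)=0$. Hence $\overline{D}_{T}T=D_{T}T$, which shows that $\overline{D}_{T}T$ has no normal component and lies entirely in the tangent space of $M$. Applying the Frenet formula $\overline{D}_{T}T=k_{1}V_{2}$ in $E^{n}$, I conclude that $V_{2}$ is tangent to $M$ along $\alpha$; in particular $V_{2}\in \varkappa (M)$, and therefore $\left\langle V_{2},\xi _{j}\right\rangle =0$ for the normal component $\xi _{j}$ of any $d_{j}\in H(M)$.

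Next, since $M$ is a strong $r$-helix submanifold, I would decompose each helix direction into its tangent and normal parts, $d_{j}=\cos (\theta _{j})\xi _{j}+\sin (\theta _{j})T_{j}$ with $\theta _{j}$ constant. Taking the scalar product with $V_{2}$ yields $\left\langle d_{j},V_{2}\right\rangle =\cos (\theta _{j})\left\langle \xi _{j},V_{2}\right\rangle +\sin (\theta _{j})\left\langle T_{j},V_{2}\right\rangle$. The first term vanishes because $V_{2}$ is tangent while $\xi _{j}$ is normal, and $\left\langle T_{j},V_{2}\right\rangle$ is constant along $\alpha$ by hypothesis. Therefore $\left\langle d_{j},V_{2}\right\rangle =\sin (\theta _{j})\left\langle T_{j},V_{2}\right\rangle$ is constant along $\alpha$, and by Definition 2.3 the curve $\alpha$ is a slant helix in $E^{n}$.

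The main obstacle --- indeed essentially the only substantive step --- is establishing that $V_{2}$ is tangent to $M$. Everything hinges on reading $k_{T}=0$ through Definition 2.6 as $V(T,T)=0$ and then feeding this into the Gauss equation to strip away the normal component of $\overline{D}_{T}T$. Once $V_{2}$ is known to be tangent, the orthogonality $\left\langle V_{2},\xi _{j}\right\rangle =0$ is automatic, and the remainder is the same bookkeeping as in Theorem 3.2 with the constancy hypothesis now placed on $\left\langle V_{2},T_{j}\right\rangle$ instead of $\left\langle V_{2},\xi _{j}\right\rangle$.
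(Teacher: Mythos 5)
Your proposal is correct and follows essentially the same route as the paper's own proof: use the Gauss equation together with $k_{T}=\left\Vert V(T,T)\right\Vert =0$ to conclude $\overline{D}_{T}T=D_{T}T=k_{1}V_{2}$, hence $V_{2}$ is tangent to $M$, then take the scalar product of the decomposition $d_{j}=\cos (\theta _{j})\xi _{j}+\sin (\theta _{j})T_{j}$ with $V_{2}$ so that the normal term vanishes and the tangent term is constant by hypothesis. The identification of $V_{2}$ as tangent (the mirror of Theorem 3.2, where it is normal) is precisely the key step the paper uses as well.
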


\begin{proof}
Let $T$ be the unit tangent vector field of $\alpha $. Then, from the
formula Gauss in Definition (2.4),%
\begin{equation}
\overline{D}_{T}T=D_{T}T+V(T,T)
\end{equation}%
According to the Theorem, since the normal curvature $k_{T}=0$,%
\begin{equation}
V(T,T)=0
\end{equation}%
So, by using (3.8),(3.9) and Frenet formulas, we have:%
\begin{equation*}
\overline{D}_{T}T=k_{1}V_{2}=D_{T}T\text{.}
\end{equation*}%
That is, the vector field $V_{2}\in T_{\alpha (t)}M$, where $T_{\alpha (t)}M$
is the tangent space of $M$. On the other hand, since $M$ is a strong $r$%
-helix submanifold, we can decompose any $d_{j}\in H(M)$ in its tangent and
normal components:%
\begin{equation}
d_{j}=\cos (\theta _{j})\xi _{j}+\sin (\theta _{j})T_{j}
\end{equation}%
where $\theta _{j}$ is constant. Moreover, according to the Theorem, $%
\left\langle V_{2},T_{j}\right\rangle $ is a constant function along the
curve $\alpha $ for the tangent component $T_{j}$ of \ a direction $d_{j}\in
H(M)$. Hence, doing the scalar product with $V_{2}$ in each part of the
equation (3.10), we obtain: 
\begin{equation}
\left\langle d_{j},V_{2}\right\rangle =\cos (\theta _{j})\left\langle
V_{2},\xi _{j}\right\rangle +\sin (\theta _{j})\left\langle
V_{2},T_{j}\right\rangle
\end{equation}%
Since $\sin (\theta _{j})\left\langle V_{2},T_{j}\right\rangle =$constant
and $\left\langle V_{2},\xi _{j}\right\rangle =0$ ($V_{2}\in T_{\alpha (t)}M$%
) along the curve $\alpha $, from (3.11) we have:%
\begin{equation*}
\left\langle d_{j},V_{2}\right\rangle =\text{constant.}
\end{equation*}%
along the curve $\alpha $. Consequently, $\alpha $ is a slant helix in $%
E^{n} $.
\end{proof}

\begin{definition}
Given an Euclidean submanifold of arbitrary codimension $M\subset IR^{n}$. A
curve $\alpha $ in $M$ is called a line of curvature if its tangent $T$ is a
principal vector at each of its points. In other words, when $T$ (the
tangent of $\alpha $) is a principal vector at each of its points, for an
arbitrary normal vector field $N\in \vartheta (M)$, the shape operator $%
A_{N} $ associated to $N$ says $A_{N}(T)=$tang$(-$ $\overline{D}%
_{T}N)=\lambda _{j}T$ along the curve $\alpha $, where $\lambda _{j}$ is a
principal curvature and $\overline{D}$ be the Riemannian connexion(standart
covariant derivative) on $IR^{n}$ [2].
\end{definition}

\begin{theorem}
Let $M$ be a submanifold with $(n-k)$ dimension in $E^{n}$ and let $%
\overline{D}$ be Riemannian connexion (standart covariant derivative) on $%
E^{n}$. Let us assume that $M\subset E^{n}$ be a strong $r$-helix
submanifold and $H(M)\subset E^{n}$ be the space of the helix directions of $%
M$. If $\alpha :I\rightarrow M$ is a line of curvature with respect to the
normal component $N_{j}\in \vartheta (M)$ of a direction $d_{j}\in H(M)$ and
if $N_{j}^{%
{\acute{}}%
}\in \varkappa (M)$ along the curve $\alpha $, then $d_{j}\in Sp\left\{
T\right\} ^{\bot }$ along the curve $\alpha $, where $T$ \ is the unit
tangent vector field of $\alpha $.
\end{theorem}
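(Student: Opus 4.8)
The plan is to use that $d_{j}$ is a fixed unit vector of $E^{n}$, so $\overline{D}_{T}d_{j}=0$, and to extract information from the tangent--normal splitting of $d_{j}$ that the strong $r$-helix hypothesis supplies. Along $\alpha $ write
\begin{equation*}
d_{j}=N_{j}+T_{j},
\end{equation*}
where $N_{j}\in \vartheta (M)$ is the prescribed normal component and $T_{j}\in \varkappa (M)$ is the tangent component. Since $d_{j}$ is a helix direction, the angle it makes with each tangent space $T_{\alpha (s)}M$ is constant; hence both $\left\Vert N_{j}\right\Vert $ and $\left\Vert T_{j}\right\Vert $ are constant along $\alpha $, and in particular $\left\langle d_{j},N_{j}\right\rangle =\left\Vert N_{j}\right\Vert ^{2}$ is constant.

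Next I would differentiate the splitting. From $\overline{D}_{T}d_{j}=0$ we get $\overline{D}_{T}N_{j}=-\overline{D}_{T}T_{j}$. The Weingarten formula (Definition 2.5) gives $\overline{D}_{T}N_{j}=-A_{N_{j}}(T)+D_{T}^{\bot }N_{j}$, and the hypothesis $N_{j}^{\prime }=\overline{D}_{T}N_{j}\in \varkappa (M)$ forces the normal part $D_{T}^{\bot }N_{j}$ to vanish, so $\overline{D}_{T}N_{j}=-A_{N_{j}}(T)$. Because $\alpha $ is a line of curvature with respect to $N_{j}$, the definition of line of curvature (Definition 3.1) yields $A_{N_{j}}(T)=\lambda _{j}T$ for a principal curvature $\lambda _{j}$, whence
\begin{equation*}
\overline{D}_{T}N_{j}=-\lambda _{j}T,\qquad \overline{D}_{T}T_{j}=\lambda _{j}T.
\end{equation*}

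To finish I would combine these identities with the constancy of the norms. Differentiating $\left\Vert T_{j}\right\Vert ^{2}=$ const and using $\overline{D}_{T}T_{j}=\lambda _{j}T$ gives $0=\left\langle \overline{D}_{T}T_{j},T_{j}\right\rangle =\lambda _{j}\left\langle T,T_{j}\right\rangle $; since $N_{j}$ is orthogonal to $T$ we have $\left\langle T,T_{j}\right\rangle =\left\langle T,d_{j}\right\rangle $, so that
\begin{equation*}
\lambda _{j}\left\langle d_{j},T\right\rangle =0
\end{equation*}
along $\alpha $. (The same relation drops out of differentiating $\left\langle d_{j},N_{j}\right\rangle =\left\Vert N_{j}\right\Vert ^{2}$ and using $\overline{D}_{T}N_{j}=-\lambda _{j}T$.) When $\lambda _{j}\neq 0$ this forces $\left\langle d_{j},T\right\rangle =0$, i.e. $d_{j}\in Sp\left\{ T\right\} ^{\bot }$.

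The step I expect to be the main obstacle is exactly this last one: the computation only delivers $\lambda _{j}\left\langle d_{j},T\right\rangle =0$, so concluding $d_{j}\perp T$ requires the principal curvature $\lambda _{j}$ to be nonzero along $\alpha $. A flat piece of $M$ (for instance a non-straight line of curvature on a plane, where every shape operator vanishes) shows the conclusion can genuinely fail when $\lambda _{j}\equiv 0$. I would therefore either add the nondegeneracy assumption $\lambda _{j}\neq 0$ to the statement or treat the vanishing-curvature locus separately; every step preceding the division by $\lambda _{j}$ is a routine consequence of the Gauss and Weingarten equations together with the helix condition.
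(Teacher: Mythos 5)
Your argument is essentially the paper's own proof: the paper differentiates $\left\langle N_{j},d_{j}\right\rangle =\cos (\theta _{j})=$ constant to get $\left\langle N_{j}^{\prime },d_{j}\right\rangle =0$, uses the hypothesis $N_{j}^{\prime }\in \varkappa (M)$ together with the line-of-curvature condition to write $-N_{j}^{\prime }=\lambda _{j}T$, and combines the two identities --- which is exactly the second route you mention (differentiating $\left\langle d_{j},N_{j}\right\rangle $), so the two proofs coincide up to your choice of unnormalized components. The obstacle you flag at the end is genuine and is present, unacknowledged, in the paper: its final step passes from $\lambda _{j}\left\langle T,d_{j}\right\rangle =0$ to $\left\langle T,d_{j}\right\rangle =0$ with no hypothesis guaranteeing $\lambda _{j}\neq 0$, and your flat counterexample (a circle in a plane $M\subset E^{3}$ with an oblique direction $d_{j}$, for which $N_{j}^{\prime }=0\in \varkappa (M)$ and $A_{N_{j}}(T)=0\cdot T$, so every stated hypothesis holds while $\left\langle d_{j},T\right\rangle $ is nonconstant and nonzero) shows the theorem as stated is false. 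So you have not merely reproduced the paper's argument; you have correctly identified that the statement needs the additional nondegeneracy assumption $\lambda _{j}\neq 0$ along $\alpha $.
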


\begin{proof}
We assume that $\alpha :I\rightarrow M$ is a line of curvature with respect
to the normal component $N_{j}\in \vartheta (M)$ of a direction $d_{j}\in
H(M)$. Since $M$ is a strong $r$-helix submanifold, we can decompose $%
d_{j}\in H(M)$ in its tangent and normal components:%
\begin{equation*}
d_{j}=\cos (\theta _{j})N_{j}+\sin (\theta _{j})T_{j}\text{ }
\end{equation*}%
where $\theta _{j}$ is constant. So, $\left\langle N_{j},d_{j}\right\rangle
= $constant and by taking derivatives on both sides along the curve $\alpha $%
, we get $\left\langle N_{j}^{%
{\acute{}}%
},d_{j}\right\rangle =0$. On the other hand, since $\alpha :I\rightarrow M$
is a line of curvature with respect to the $N_{j}\in \vartheta (M)$,%
\begin{equation*}
A_{N_{j}}(T)\text{=tang}(-\overline{D}_{T}N_{j})=\text{tang}(-N_{j}^{%
{\acute{}}%
})=\lambda _{j}T\text{ }
\end{equation*}%
along the curve $\alpha $. According to this Theorem, $N_{j}^{%
{\acute{}}%
}\in \varkappa (M)$ along the curve $\alpha $. Hence,%
\begin{equation}
\text{tang}(-N_{j}^{%
{\acute{}}%
})=-N_{j}^{%
{\acute{}}%
}=\lambda _{j}T\text{ }
\end{equation}%
Therefore, by using the equalities $\left\langle N_{j}^{%
{\acute{}}%
},d_{j}\right\rangle =0$ and (3.12), we obtain:%
\begin{equation*}
\left\langle T,d_{j}\right\rangle =0
\end{equation*}%
along the curve $\alpha $. This completes the proof.
\end{proof}

\begin{theorem}
Let $M$ be a submanifold with $(n-k)$ dimension in $E^{n}$ and let $%
\overline{D}$ be Riemannian connexion (standart covariant derivative) on $%
E^{n}$. Let us assume that $M\subset E^{n}$ be a strong $r$-helix
submanifold and $H(M)\subset E^{n}$ be the space of the helix directions of $%
M$. If $\alpha :I\rightarrow M$ is a curve in $M$ and if the system $\left\{
T_{j}^{%
{\acute{}}%
},T\right\} $ is linear dependent along the curve $\alpha $, where $T_{j}^{%
{\acute{}}%
}$ is the derivative of the tangent component $T_{j}$ of a direction $%
d_{j}\in H(M)$ and $T$ the tangent to the curve $\alpha $, then $\alpha $ is
a line of curvature in $M$.
\end{theorem}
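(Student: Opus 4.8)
The goal is to verify the line-of-curvature criterion of Definition 3.1 for $\alpha $: I must produce a scalar $\lambda _{j}$ with $A_{N_{j}}(T)=\lambda _{j}T$, where $N_{j}$ denotes the normal component of $d_{j}$, since this says precisely that the unit tangent $T$ is a principal vector. The plan is to extract this from the fact that $d_{j}\in H(M)$ is a \emph{fixed} direction of $E^{n}$ and hence a constant vector field along $\alpha $, so that $\overline{D}_{T}d_{j}=0$. Since $M$ is a strong $r$-helix submanifold, I first decompose $d_{j}$ into its normal and tangent parts, $d_{j}=\cos(\theta _{j})N_{j}+\sin(\theta _{j})T_{j}$ with $\theta _{j}$ constant, exactly as in the earlier theorems. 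Differentiating this identity along $\alpha $ and using $\overline{D}_{T}d_{j}=0$ gives
\[
\cos(\theta _{j})\,\overline{D}_{T}N_{j}=-\sin(\theta _{j})\,T_{j}^{\prime }.
\]

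Next I would pass to tangential parts and invoke the Weingarten formula of Definition 2.5, which asserts $A_{N_{j}}(T)=\mathrm{tang}(-\overline{D}_{T}N_{j})$. Applying $\mathrm{tang}(\cdot )$ to the displayed identity yields $\cos(\theta _{j})\,A_{N_{j}}(T)=\sin(\theta _{j})\,\mathrm{tang}(T_{j}^{\prime })$, that is $A_{N_{j}}(T)=\tan(\theta _{j})\,\mathrm{tang}(T_{j}^{\prime })$ in the generic case $\cos(\theta _{j})\neq 0$. Here the hypothesis does its work: if $\{T_{j}^{\prime },T\}$ is linearly dependent along $\alpha $, then, since $T$ is a nonzero unit field, $T_{j}^{\prime }$ must be a scalar multiple of $T$, say $T_{j}^{\prime }=cT$. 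In particular $T_{j}^{\prime }$ is already tangent to $M$, so $\mathrm{tang}(T_{j}^{\prime })=T_{j}^{\prime }=cT$, whence $A_{N_{j}}(T)=c\tan(\theta _{j})\,T=\lambda _{j}T$.

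Because $A_{N_{j}}(T)$ is a scalar multiple of $T$ at each point, the tangent $T$ is a principal vector of the shape operator $A_{N_{j}}$, which by Definition 3.1 is exactly the assertion that $\alpha $ is a line of curvature (with respect to $N_{j}$) in $M$, completing the argument.

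I expect the one genuinely delicate point to be the correct reading of the linear-dependence hypothesis: the whole force of the theorem rests on observing that dependence of $\{T_{j}^{\prime },T\}$ with $T$ tangent forces $T_{j}^{\prime }$ to lie along $T$, so that its tangential projection is trivial and the shape operator automatically returns a multiple of $T$. The identity $\overline{D}_{T}d_{j}=0$ and the Weingarten reduction are then routine. The only caveat is the degenerate case $\cos(\theta _{j})=0$, where $d_{j}$ is purely tangential; there one argues instead from the unnormalized splitting $d_{j}=N_{j}+T_{j}$, which gives $A_{N_{j}}(T)=\mathrm{tang}(T_{j}^{\prime })$ directly, and the same conclusion follows.
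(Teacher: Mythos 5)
Your proof is correct and follows essentially the same route as the paper's: the same constant-angle decomposition $d_{j}=\cos (\theta _{j})N_{j}+\sin (\theta _{j})T_{j}$, differentiation along $\alpha $ using that $d_{j}$ is a fixed vector of $E^{n}$, the Weingarten identity $A_{N_{j}}(T)=\mathrm{tang}(-\overline{D}_{T}N_{j})$, and the key observation that linear dependence of $\{T_{j}^{\prime },T\}$ with $T$ a unit field forces $T_{j}^{\prime }=cT$, so its tangential projection is itself and $A_{N_{j}}(T)$ comes out a multiple of $T$. The only divergence is cosmetic and in your favor: you explicitly flag and handle the degenerate case $\cos (\theta _{j})=0$, which the paper silently passes over when it divides by $\cos (\theta _{j})$ to write $N_{j}^{\prime }=-\tan (\theta _{j})T_{j}^{\prime }$.
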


\begin{proof}
Since $M$ is a strong $r$-helix submanifold, we can decompose $d_{j}\in H(M)$
in its tangent and normal components:%
\begin{equation}
d_{j}=\cos (\theta _{j})N_{j}+\sin (\theta _{j})T_{j}\text{ }
\end{equation}%
where $\theta _{j}$ is constant. If we take derivative in each part of the
equation (3.13) along the curve $\alpha $, we obtain:%
\begin{equation}
0=\cos (\theta _{j})N_{j}^{%
{\acute{}}%
}+\sin (\theta _{j})T_{j}^{%
{\acute{}}%
}\text{ }
\end{equation}%
From (3.14), we can write%
\begin{equation}
N_{j}^{%
{\acute{}}%
}=-\tan (\theta _{j})T_{j}^{%
{\acute{}}%
}\text{ }
\end{equation}%
So, for the tangent component of $-N_{j}^{%
{\acute{}}%
}$, from (3.15) we can write:%
\begin{equation}
A_{N_{j}}(T)\text{=tang}(-\overline{D}_{T}N_{j})=\text{tang}(-N_{j}^{%
{\acute{}}%
})=\text{tang}(\tan (\theta _{j})T_{j}^{%
{\acute{}}%
})\text{ }
\end{equation}%
along the curve $\alpha $. According to the hypothesis, the system $\left\{
T_{j}^{%
{\acute{}}%
},T\right\} $ is linear dependent along the curve $\alpha $. Hence, we get $%
T_{j}^{%
{\acute{}}%
}=\lambda _{j}T$. And, by using the equation (3.16), we have:%
\begin{equation*}
A_{N_{j}}(T)=\text{tang}(\tan (\theta _{j})T_{j}^{%
{\acute{}}%
})=\text{tang}(\tan (\theta _{j})\lambda _{j}T)
\end{equation*}%
and%
\begin{equation}
A_{N_{j}}(T)=\text{tang}(\tan (\theta _{j})\lambda _{j}T)\text{ }
\end{equation}%
Moreover, since $T\in \varkappa (M)$, tang$(\tan (\theta _{j})\lambda
T)=(\tan (\theta _{j})\lambda _{j})T=k_{j}T$. Therefore, from (3.17), we
have:%
\begin{equation*}
A_{N_{j}}(T)=k_{j}T\text{.}
\end{equation*}%
It follows that $\alpha $ is a line of curvature in $M$ for $N_{j}\in
\vartheta (M)$. This completes the proof.
\end{proof}

\noindent \textbf{Acknowledgment.} The authors would like to thank two
anonymous referees for their valuable suggestions and comments that helped
to improve the presentation of this paper.


\begin{thebibliography}{99}
\bibitem{Aktas} Ali, A.T., Turgut, M., Some characterizations of slant
helices in the euclidean space $E^{n}$, Hacettepe Journal of Mathematics and
Statistics, Volume 39 (3) (2010), 327-336.

\bibitem{Aktas-1} Cartan, E., Geometry of Riemannian spaces, Brookline
Mass., Math. Sci. Press, (1983).

\bibitem{Abdullah} Cermelli,P., Di Scala, A.J., Constant angle surfaces in
liquid crystals, Philos.Mag. 87, (2007) 1871-1888.

\bibitem{Cakmak} Di Scala, A.J., Weak helix submanifolds of Euclidean
spaces, Abh.Math.Semin.Univ.Hambg., (2009) 79: 37-46

\bibitem{Zheng} Di Scala, A.J., Ruiz-Hern\'{a}ndez, G., Helix submanifolds
of euclidean spaces, Monatsh Math 157, (2009) 205-215.

\bibitem{Agarwal} Dillen, F., Fastenakels, J., Van der Verken, J., Vrancken,
L., Constant angle surfaces in $S^{2}\times IR$, Monatsh. Math. 152, (2007)
89-96.

\bibitem{Swanson} Dillen, F., Munteanu, M.I., Constant angle surfaces in $%
H^{2}\times IR$, Bull. Braz. Math. Soc. 40, (2009) 1, 85-97.

\bibitem{Baculikova} Ghomi, M., Shadows and convexity of surfaces, Ann.
Math. (2) 155 (1), (2002) 281-293.

\bibitem{Bartusek} Hicks, N.J., Notes on differential geometry, Van Nostrand
Reinhold Company, London, 1974.

\bibitem{Cecchi} Lopez, R., Munteanu, M.I., Constant angle surfaces in
Minkowski space, Bull.Belg. Math. Soc.-Simon Stevin,18 (2011) 2, 271-286.

\bibitem{Cecchi1} Munteanu, M.I., From golden spirals to constant slope
surfaces, Journal of Mathematical Physics, 51 (2010) 7, 073507: 1-9.

\bibitem{Parhi-1} Ruiz-Hern\'{a}ndez,G., Helix, shadow boundary and minimal
submanifolds, arXiv: 0706.1524 (2007).
\end{thebibliography}
\end{document}